\documentclass[12pt]{amsproc}

\usepackage{amsmath,amssymb,amsthm,mathrsfs}

\title[Simultaneous Niven Numbers for Power-Related Bases]{Simultaneous Niven Numbers in Arithmetic Progressions for Power-Related Bases}
\author[Scott Duke Kominers]{Scott Duke Kominers$^*$}
\thanks{$^*$Harvard Business School; Department of Economics and Center of Mathematical Sciences and Applications, Harvard University; and a16z crypto. \newline\indent 
I gratefully acknowledge helpful comments from Ben~Golub, Steven~J.~Miller, Ken~Ono, Jesse~Shapiro, and the editor (John Loxton), and particularly thank the anonymous referee for an especially close reading. Part of this work was conducted while I was visiting the Technological Innovation, Entrepreneurship, and Strategic Management (TIES) group at the MIT Sloan School of Management; I greatly appreciate their hospitality.\newline\indent
I used LLMs to assist with some of the computations in the preparation of this article, particularly GPT-5.2-Pro and Claude-Sonnet-4.5 (both accessed via Poe with the support of Quora, where I am an advisor). The problem, methods, and eventual written form are my own; and of course any errors remain my responsibility.}

\usepackage[usenames,dvipsnames]{xcolor}
\usepackage{geometry}
\usepackage{xspace}

\usepackage{aliascnt}
\usepackage{hyperref}
	\hypersetup{colorlinks=true, citecolor=blue, linkcolor=BrickRed, urlcolor=OliveGreen, pdfstartview={FitH}}
	
	\newtheorem{theorem}{Theorem}[section]

	\newaliascnt{lemma}{theorem}
	\newtheorem{lemma}[lemma]{Lemma}
	\aliascntresetthe{lemma}

	\newaliascnt{corollary}{theorem}
	\newtheorem{corollary}[corollary]{Corollary}
	\aliascntresetthe{corollary}

	\newaliascnt{proposition}{theorem}
	\newtheorem{proposition}[proposition]{Proposition}
	\aliascntresetthe{proposition}

	\theoremstyle{definition}
	\newaliascnt{definition}{theorem}
	
	\aliascntresetthe{definition}

	\newaliascnt{remark}{theorem}
	\newtheorem{remark}[remark]{Remark}
	\aliascntresetthe{remark}

	\newaliascnt{example}{theorem}
	
	\aliascntresetthe{example}

	\usepackage[capitalize,noabbrev]{cleveref}
	\crefname{theorem}{Theorem}{Theorems}
	\Crefname{theorem}{Theorem}{Theorems}
	\crefname{lemma}{Lemma}{Lemmas}
	\Crefname{lemma}{Lemma}{Lemmas}
	\crefname{corollary}{Corollary}{Corollaries}
	\Crefname{corollary}{Corollary}{Corollaries}
	\crefname{proposition}{Proposition}{Propositions}
	\Crefname{proposition}{Proposition}{Propositions}
	\crefname{definition}{Definition}{Definitions}
	\Crefname{definition}{Definition}{Definitions}
	\crefname{remark}{Remark}{Remarks}
	\Crefname{remark}{Remark}{Remarks}
	\crefname{example}{Example}{Examples}
	\Crefname{example}{Example}{Examples}

\newcommand{\N}{\mathbb{Z}_{\ge 1}}
\newcommand{\Z}{\mathbb{Z}}
\newcommand{\digs}{\mathsf{s}}
\DeclareMathOperator{\ord}{ord}
\DeclareMathOperator{\rad}{rad}
\DeclareMathOperator{\lcm}{lcm}
\newcommand{\Bb}{\overline{B}}
\newcommand{\zmod}[1]{\pmod{#1}}
\newcommand{\zzmod}[1]{\!\!\!\!\zmod{#1}}

\begin{document}

\begin{abstract}
Recently, Harrington, Litman, and Wong [\textit{Bulletin of the Australian Mathematical Society}, 2024] proved that every arithmetic progression contains infinitely many base-$b$ Niven numbers, for any fixed $b\ge 2$. We use a sparse repunit construction to treat a structured two-base version of the same problem, showing that every arithmetic progression with common difference relatively prime to $b$ contains infinitely many integers that are simultaneously $b$-Niven and $b^k$-Niven (indeed, we can obtain simultaneous $b^\ell$-Niven-ness for $\ell=1,\ldots, k$).
\end{abstract}

\subjclass[2020]{Primary: 11A63 / Secondary: 11B25}
\keywords{Digit sums, Niven (Harshad) numbers, arithmetic progressions, multiplicative order, repunits}

\maketitle

\section{Introduction}

A positive integer is called a \emph{Niven number} (or \emph{Harshad number}) in a given base if it is divisible by the sum of its digits in that base. These numbers were introduced in the decimal setting by Kaprekar \cite{Kaprekar1955} and have since been studied from a variety of analytic and combinatorial perspectives (see, e.g., \cite{KennedyGoodmanBest1980,kennedy1984natural,kennedy1989niven,grundman1994sequences,cai19962,dekoninck2003number,DeKoninckDoyonKatai2003,fredricksen2007remarks,de2008counting,Witno2016,Sanna2021}).

Recently, Harrington, Litman, and Wong \cite{HLW2024} proved that every arithmetic progression contains infinitely many base-$b$ Niven numbers, for any fixed $b\ge 2$. Inspired by their analysis, here we investigate a variation of their question for certain \emph{simultaneous} Niven conditions in multiple bases. 

We treat a particularly structured two-base version of the problem: considering the pair of bases $\{b,B\}$ where $B=b^k$ is a power of $b$. For a single base $b$, a multiplicative-order repunit argument already gives an elementary proof of infinitude in any arithmetic progression whose common difference is relatively prime to the base. We show how to combine that idea with a digit-sum compatibility construction---a version of Witno's ``sparse repunits''~\cite{Witno2016}---to obtain explicit simultaneous $b$- and $b^k$-Niven numbers in every such progression. As a corollary, we show that it is possible to force Niven-ness in our arithmetic progressions simultaneously for any (finite) number of powers of $b$.

The restriction to power-related bases is not merely a convenience but reflects a genuine structural simplification. When $B=b^k$, each base-$B$ digit corresponds to a block of $k$ base-$b$ digits, so by arranging for the base-$B$ digits to be small (indeed, in $\{0,1,\dots,b-1\}$) we can ensure that passing from base $B$ to base $b$ creates no carries across blocks, and hence preserves digit sums. This digit-block compatibility provides a concrete mechanism for linking Niven conditions across the two bases. Our approach exploits this linkage together with a multiplicative-order ``repunits with gaps'' construction that simultaneously forces the desired residue class modulo $m$ and divisibility by a prescribed digit sum.

\subsection{Main result}
For positive integers $m\ge 1$ and $r$ with $0\le r<m$, we write
\[
S_{m,r}:=\{n\in\N: n\equiv r \zzmod{m}\},
\]
for the arithmetic progression $r+t\cdot m$.

\begin{theorem}\label{thm:main}
Let $b \ge 2$ and $k \ge 1$ be integers, and set $B = b^k$. If $m\ge 1$ is such that $\gcd(m,b)=1$, then for any $r$ with $0\le r<m$, there exist infinitely many integers $n \in S_{m,r}$ such that
\[
\digs_b(n) \mid n
\quad\text{and}\quad
\digs_B(n) \mid n;
\]
i.e., there are infinitely many $n$ in the arithmetic progression $S_{m,r}$ that are simultaneously $b$- and $b^k$-Niven. Moreover, for any $s_0\geq1$, there exists such an $n$ with $\digs_b(n) =\digs_B(n) \geq s_0$.
\end{theorem}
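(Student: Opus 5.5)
The plan is to take $n$ to be a sum of $s$ distinct powers of $B$ whose exponents are all multiples of a suitable multiplicative order. Such an $n$ has exactly $s$ digits equal to $1$ in base $B$ and all other digits $0$, and the same holds in base $b$. Its residue modulo $sm$ then collapses to $s$.

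The first step is to choose the digit sum. Since $\gcd(m,b)=1$, the Chinese remainder theorem gives infinitely many positive integers $s$ with
\[
s\equiv r \zzmod{m}\qquad\text{and}\qquad s\equiv 1 \zzmod{b}.
\]
In particular we may take $s\ge s_0$, and any such $s$ satisfies $\gcd(s,b)=1$. Then $M:=sm$ is coprime to $b$, hence to $B=b^k$, so $L:=\ord_M(B)$ is well defined and $B^{L}\equiv 1 \zmod{M}$.

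The second step is the construction. Set
\[
n:=\sum_{i=1}^{s} B^{iL}=\sum_{i=1}^{s} b^{kiL}.
\]
The exponents $iL$ are distinct, so the base-$B$ expansion of $n$ has digit $1$ in positions $L,2L,\dots,sL$ and $0$ elsewhere, giving $\digs_B(n)=s$. Likewise the exponents $kiL$ are distinct, so the base-$b$ expansion consists of $s$ ones and zeros, giving $\digs_b(n)=s$. This is the no-carry block compatibility described in the introduction, here in its simplest form with digits in $\{0,1\}$.

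The third step is the congruence. Modulo $M$ each summand satisfies $B^{iL}\equiv 1$, so $n\equiv s \zmod{sm}$. Reducing modulo $s$ gives $s\mid n$, so $n$ is simultaneously $b$- and $B$-Niven. Reducing modulo $m$ gives $n\equiv s\equiv r \zmod m$, so $n\in S_{m,r}$. This also covers $r=0$, where $s$ is a multiple of $m$ and still coprime to $b$.

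Finally, infinitude and the clause about $s_0$ follow because each admissible $s$ yields an $n$ with common digit sum exactly $s$, and there are infinitely many admissible $s\ge s_0$; distinct $s$ give distinct $n$. The only genuinely delicate point is ensuring $\gcd(sm,b)=1$ so that the order $L$ exists. The condition $s\equiv 1\zmod b$ handles the factor $s$, and the hypothesis $\gcd(m,b)=1$ handles $m$, so I expect no real obstacle beyond this bookkeeping.
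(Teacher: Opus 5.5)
Your proposal is correct and follows essentially the same route as the paper. You choose a digit sum $s\equiv r \pmod{m}$ with $\gcd(s,b)=1$, take the spacing $\ord_{ms}(B)$, and form a length-$s$ sparse base-$B$ repunit, so that $n\equiv s \pmod{ms}$ and $\digs_b(n)=\digs_B(n)=s$. The only differences are cosmetic: you impose $s\equiv 1 \pmod{b}$ directly rather than modulo $\rad(b)$, and your exponents run over $L,2L,\dots,sL$ instead of $0,L,\dots,(s-1)L$ (so your $n$ is $B^{L}n_s$); neither change affects the argument.
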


\section{Preliminaries}\label{sec:prelim}

\subsection{Digit sums and base expansions}

For an integer base $g\ge 2$, every positive integer $c$ has a unique base-$g$ expansion
\[
c=\sum_{i=0}^{L} d_i g^i,\qquad d_i\in\{0,1,\dots,g-1\},\ d_L\ne 0;
\]
in this case, the sum of $c$'s base-$g$ digits is
\[
\digs_g(c)=\sum_{i=0}^{L} d_i.
\]
For convenience, we set $\digs_g(0)=0$. We say that $c$ is \emph{$g$-Niven} if $\digs_g(c)\mid c$.

We also write
\[
\rad(c):=\prod_{\substack{p\mid c\\ p\text{ prime}}} p
\]
for the \emph{radical} of an integer $c\ge 1$, i.e., the product of the distinct prime divisors of $c$, with the convention that $\rad(1)=1$.

\subsection{Compatibility between bases $b$ and $B=b^k$}

For power-related bases $b$ and $B=b^k$, a base-$B$ digit occupies a block of $k$ base-$b$ digits. The following elementary observation linking those digit blocks is central to our construction.

\begin{proposition}\label{prop:digit-compat}
Let $b\ge 2$ and $k\ge 1$ be integers, and set $B=b^k$. Suppose that $n$ admits a base-$B$
expansion of the form
\[
n=\sum_{i=0}^{L} d_i B^i
\quad\text{with}\quad
d_i\in\{0,1,\dots,b-1\}\ \text{for all $i$ and } d_L\ne 0.
\]
Then, we have
\[
\digs_b(n)=\digs_B(n)=\sum_{i=0}^{L} d_i.
\]
\end{proposition}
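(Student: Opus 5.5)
The statement follows directly from the uniqueness of base expansions, applied in each of the two bases. For base $B$ there is nothing to do. Since each $d_i\in\{0,1,\dots,b-1\}\subseteq\{0,1,\dots,B-1\}$ and $d_L\neq 0$, the given representation $n=\sum_{i=0}^{L} d_i B^i$ is already a legitimate base-$B$ expansion. By uniqueness it is \emph{the} base-$B$ expansion of $n$, so $\digs_B(n)=\sum_{i=0}^{L} d_i$.

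For base $b$, I will substitute $B^i=b^{ki}$ and rewrite
\[
n=\sum_{i=0}^{L} d_i\, b^{ki}=\sum_{j=0}^{kL} e_j\, b^j,
\]
where $e_{ki}=d_i$ for $0\le i\le L$ and $e_j=0$ whenever $k\nmid j$. Each $e_j$ lies in $\{0,1,\dots,b-1\}$, and the leading coefficient $e_{kL}=d_L$ is nonzero, so this is a valid base-$b$ expansion of $n$. By uniqueness again it is the base-$b$ expansion, and so
\[
\digs_b(n)=\sum_{j} e_j=\sum_{i=0}^{L} d_i.
\]
Combining the two computations gives $\digs_b(n)=\digs_B(n)=\sum_{i} d_i$.

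The only step needing care is checking that the rewritten sum really is a base-$b$ expansion. The digit bound $d_i\le b-1$ is exactly what ensures no carries occur. The distinct exponents $ki$ never collide, so each position $b^j$ receives at most one $d_i$. Together these mean no coefficient can exceed $b-1$. The case $k=1$ is trivial and is covered by the same argument.
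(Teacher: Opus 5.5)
Your proof is correct and takes the same route as the paper: substitute $B^i=b^{ki}$ and note that the exponents $ki$ are distinct and each $d_i\le b-1$, so no carries occur and the base-$b$ digits are exactly the $d_i$, padded with zeros. Your explicit appeal to uniqueness of base expansions in both bases (where the paper simply says ``by definition'' for base $B$) is a slightly more careful phrasing of the same argument.
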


\begin{proof}
We write $B^i=b^{ki}$, so
\[
n=\sum_{i=0}^{L} d_i b^{ki}.
\]
Since $0\le d_i\le b-1$, the base-$b$ expansion of $d_i b^{ki}$ consists of a single digit $d_i$ in position $ki$ and $0$s elsewhere. The positions $ki$ are distinct, so when summing over $i$, each base-$b$ position receives at most one nonzero digit and therefore no carries occur. Hence the base-$b$ digit sum equals $\sum_i d_i$. On the other hand, the base-$B$ digit sum is also $\sum_i d_i$ by definition.
\end{proof}

\begin{remark}\label{rem:block-digits}
More generally, if $n=\sum_{i=0}^{L} d_i B^i$ is the base-$B$ expansion with $0\le d_i<B=b^k$, then expanding each coefficient $d_i$ in base $b$ uses at most $k$ digits, occupying the block of positions $ki,ki+1,\dots,ki+k-1$; these blocks are disjoint for different $i$, so no carries occur between them. Consequently
\[
\digs_b(n)=\sum_{i=0}^{L} \digs_b(d_i).
\]
\Cref{prop:digit-compat} is the special case in which we have $d_i<b$ for all $i$, whence we obtain $\digs_b(d_i)=d_i$.
\end{remark}

\subsection{Multiplicative order}

For integers $a\in\Z$ and $N\ge 1$ with $\gcd(a,N)=1$, the \emph{multiplicative order} of $a$ modulo $N$ is
\[
\ord_N(a):=\min\{\omega\in\N:a^\omega\equiv 1 \zzmod{N}\};
\]
$\ord_N(a)$ exists by finiteness of $(\Z/N\Z)^\times$ whenever $N\ge 2$. (When $N=1$, the congruence $a^\omega\equiv 1\zmod{1}$ holds for all $\omega$, and the above definition gives $\ord_1(a)=1$.)

We also use the Chinese Remainder Theorem (CRT) in the standard form: if $\gcd(M_1,M_2)=1$, then for any residues $x_1\zmod {M_1}$ and $x_2\zmod{M_2}$ there exists a unique residue class $x\zmod(M_1M_2)$ satisfying both congruences (see, e.g., Section 2.3 of \cite{niven1991introduction}).

\section{The construction}\label{sec:construction}

We now introduce our simultaneous-Niven number construction. Throughout, fix integers $b\ge 2$, $k\ge 1$, set $B=b^k$, and fix an arithmetic progression $S_{m,r}$ with $m\ge 1$, $0\le r<m$, and $\gcd(m,b)=1$.

\subsection*{Step 1: admissible digit sums.}
We choose integers $s\ge 1$ simultaneously satisfying
\begin{equation}\label{eq:s-admissible}
s\equiv r\zzmod{m}
\qquad\text{and}\qquad
\gcd(s,b)=1;
\end{equation}
we call such $s$ \textit{admissible}.

\subsection*{Step 2: the spacing parameter.}
Since $B=b^k$, we have $\gcd(B,ms)=1$ if and only if $\gcd(b,ms)=1$.
For each admissible $s$, the hypothesis that $\gcd(s,b)=1$ thus implies $\gcd(ms,b)=1$, given that we have chosen $m$ and $b$ such that $\gcd(m,b)=1$.
Hence, we can define\footnote{Note that any positive integer $\omega$ with $B^\omega\equiv 1\zmod{m}$ and $B^\omega\equiv 1\zmod{s}$ suffices for the congruence computations below; taking $\omega_s=\ord_{ms}(B)$ is a convenient canonical choice guaranteeing these congruences (indeed, the stronger congruence $B^{\omega_s}\equiv 1\zmod{ms}$).}
\begin{equation}
\omega_s:=\ord_{ms}(B).
\label{eq:os-def}
\end{equation}

\subsection*{Step 3: a sparse repunit in base $B$.}
Define
\begin{equation}\label{eq:ns-def}
n_s:=\sum_{j=0}^{s-1} B^{j\omega_s};
\end{equation}
equivalently, since the sum is geometric,
\[
n_s=\frac{B^{s\omega_s}-1}{B^{\omega_s}-1}.
\]

We prove in \cref{prop:ns-properties} that $n_s\equiv r\zmod{m}$, that $\digs_b(n_s)=\digs_B(n_s)=s$, and that $s\mid n_s$; i.e., for any admissible $s$, the constructed integer $n_s$ is simultaneously $b$-Niven and $B$-Niven. We then complete the proof of \cref{thm:main} in \cref{sec:infinitude} by showing that there are infinitely many admissible $s$.

\begin{remark}\label{rem:repunit}
By construction, the integer $n_s$ in \eqref{eq:ns-def} is the length-$s$ \emph{repunit} in the auxiliary base $B^{\omega_s}$, i.e.,
\[
n_s = 1+(B^{\omega_s})+(B^{\omega_s})^2+\cdots+(B^{\omega_s})^{s-1}.
\]
Equivalently, in base $B$ it is a \emph{sparse repunit}, with $1$s separated by gaps of length $\omega_s-1$. This point of view connects our construction to the study of generalized repunits by Witno~\cite{Witno2016} (see also \cite{witno2013family}). In Witno's notation (which slightly overloads with ours, so we adopt it only temporarily for this remark): For integers $n\ge 1$, $k\ge 1$, and base $b\ge 2$ we set
\[
R_{n,b,k}:=\sum_{j=0}^{n-1} b^{jk}=\frac{b^{nk}-1}{b^k-1},
\]
which in base $b$ is a string of $n$ $1$s separated by blocks of $k-1$ $0$s; equivalently $R_{n,b,k}=R_{n,b^k}$ is a generalized repunit in base $b^k$. Our $n_s$ are precisely of Witno's form with base $b$ replaced by $B$ and gap parameter $k$ replaced by $\omega_s$.

Witno's Theorem~4.2 gives a multiplicative order criterion for when such sparse repunits are $b$-Niven. In contrast, we \emph{choose} the spacing $\omega_s=\ord_{ms}(B)$ so that \[B^{\omega_s}\equiv 1\zmod{ms},\] which forces the congruence $n_s\equiv s\zmod{ms}$ and hence $s\mid n_s$, while simultaneously enforcing the arithmetic progression constraint $n_s\equiv r\zmod {m}$.
\end{remark}

\section{Verification of the desired properties}\label{sec:verification}

\begin{proposition}\label{prop:ns-properties}
Assume that $s$ satisfies \eqref{eq:s-admissible}, and define $\omega_s$ and $n_s$ by \eqref{eq:os-def} and \eqref{eq:ns-def}, respectively. Then:
\begin{enumerate}
\item $n_s\equiv r\pmod{m}$, so $n_s\in S_{m,r}$;\label{p:1}
\item $\digs_B(n_s)=\digs_b(n_s)=s$;\label{p:2}
\item $s\mid n_s$.\label{p:3}
\end{enumerate}
In particular, $n_s$ is simultaneously $b$-Niven and $B$-Niven.
\end{proposition}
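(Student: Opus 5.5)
The plan is to reduce everything to the single congruence $B^{\omega_s}\equiv 1\zmod{ms}$, which holds by the definition \eqref{eq:os-def} of $\omega_s=\ord_{ms}(B)$. This order exists because $\gcd(B,ms)=1$, as noted in Step~2. From this congruence, each term $B^{j\omega_s}=(B^{\omega_s})^j$ of \eqref{eq:ns-def} is congruent to $1$ modulo $ms$. Summing the $s$ terms gives
\[
n_s\equiv \sum_{j=0}^{s-1}1 = s \zmod{ms}.
\]
Both arithmetic claims follow from this one congruence. Reducing modulo $m$ gives $n_s\equiv s\equiv r\zmod{m}$ by the admissibility condition \eqref{eq:s-admissible}, which is \eqref{p:1}. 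Reducing modulo $s$ gives $n_s\equiv 0\zmod{s}$, which is \eqref{p:3}.

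For \eqref{p:2}, I will invoke \cref{prop:digit-compat}. Since $\omega_s\ge 1$, the exponents $j\omega_s$ for $j=0,\dots,s-1$ are distinct. So \eqref{eq:ns-def} is already a base-$B$ expansion $n_s=\sum_{i=0}^{L}d_iB^i$ with $L=(s-1)\omega_s$, where
\[
d_i=\begin{cases}1 & \text{if } \omega_s\mid i,\\ 0 & \text{otherwise.}\end{cases}
\]
All digits lie in $\{0,1\}\subseteq\{0,\dots,b-1\}$ because $b\ge 2$, and the leading digit $d_L=1$ is nonzero. By uniqueness of base-$B$ expansions this is the base-$B$ expansion of $n_s$, so \cref{prop:digit-compat} yields $\digs_b(n_s)=\digs_B(n_s)=\sum_i d_i=s$.

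Combining the two parts, $\digs_b(n_s)=\digs_B(n_s)=s$ divides $n_s$, so $n_s$ is simultaneously $b$-Niven and $B$-Niven. There is no serious obstacle here. The only points that need care are that $\omega_s$ is well defined, including the degenerate case $ms=1$ where $\omega_s=1$ and $n_s=1$ (consistent with the claims), and that the digit positions $j\omega_s$ do not collide, which holds because $\omega_s\ge1$.
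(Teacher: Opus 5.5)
Your proposal is correct and follows the paper's proof essentially step for step. From $B^{\omega_s}\equiv 1\pmod{ms}$ you get $n_s\equiv s\pmod{ms}$, which gives Parts 1 and 3 by reducing modulo $m$ and modulo $s$, and Part 2 follows by reading off the base-$B$ digits in $\{0,1\}$ and applying \cref{prop:digit-compat}; your added checks (well-definedness of $\omega_s$, non-colliding digit positions, the case $ms=1$) are correct and only make explicit what the paper leaves implicit.
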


\begin{proof}
As $\omega_s=\ord_{ms}(B)$, by construction, we have
$B^{\omega_s}\equiv 1\zmod{ms}$. Hence for every $j\ge 0$, we have
\begin{equation}
B^{j\omega_s}=(B^{\omega_s})^j\equiv 1\pmod{ms}.
\label{eq:os-mod}
\end{equation}
Expanding \eqref{eq:ns-def} modulo $ms$ using \eqref{eq:os-mod} gives
\begin{equation}
n_s=\sum_{j=0}^{s-1} B^{j\omega_s}\equiv \sum_{j=0}^{s-1} 1= s \pmod{ms}.
\label{eq:ns-mod}
\end{equation}
Reducing \eqref{eq:ns-mod} modulo $m$ yields $n_s\equiv s\zmod{m}$, and since $s\equiv r\zmod{m}$, we obtain $n_s\equiv r\zmod{m}$, proving Part \ref{p:1}.
Reducing \eqref{eq:ns-mod} modulo $s$ instead yields
\[
n_s\equiv s\equiv 0\pmod{s},
\]
proving Part \ref{p:3}.

For Part \ref{p:2}, in the base-$B$ expansion of $n_s$, the summands are distinct powers of $B$ with coefficient $1$, so no carries occur in base $B$: there are $1$s in positions $0,\omega_s,2\omega_s,\dots,(s-1)\omega_s$ and $0$s in all other positions. It follows that $\digs_B(n_s)=s$.
Since all base-$B$ digits of $n_s$ lie in $\{0,1\}\subseteq\{0,1,\dots,b-1\}$, \cref{prop:digit-compat} then gives $\digs_b(n_s)=\digs_B(n_s)=s$.
\end{proof}

\section{Infinitude of the admissible $s$}\label{sec:infinitude}

It remains to show that admissible $s$ exist infinitely often, and that the resulting $n_s$ are distinct.

\begin{lemma}\label{lem:many-s}
For any $b\ge 2$ and $m\ge 1$ with $\gcd(m,b)=1$, there exist infinitely many
integers $s\ge 1$ satisfying \eqref{eq:s-admissible} for any residue class $r\zmod{m}$.  Moreover, for any $s_0\geq1$, there exists such an $s$ with $s\geq s_0$.
\end{lemma}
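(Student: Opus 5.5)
We need integers $s\ge 1$ with $s\equiv r \pmod{m}$ and $\gcd(s,b)=1$. The plan is to reduce this to a single congruence via the Chinese Remainder Theorem and then read off infinitely many solutions from the resulting arithmetic progression.

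Write $P:=\rad(b)$, the product of the distinct primes dividing $b$. The condition $\gcd(s,b)=1$ is equivalent to $\gcd(s,P)=1$, and a clean way to force it is to require $s\equiv 1\pmod{P}$. Since $\gcd(m,b)=1$, no prime divisor of $b$ divides $m$, so $\gcd(m,P)=1$. By the Chinese Remainder Theorem (in the form recalled in \cref{sec:prelim}), there is a unique residue class $x\pmod{mP}$ with
\[
x\equiv r \pmod{m}\qquad\text{and}\qquad x\equiv 1\pmod{P}.
\]
Fix a representative $x_0\ge 1$ of this class.

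Every integer $s=x_0+t\,mP$ with $t\ge 0$ then satisfies $s\equiv r\pmod m$. It also satisfies $s\equiv 1\pmod P$, so no prime dividing $b$ divides $s$, giving $\gcd(s,b)=1$. Thus each such $s$ is admissible in the sense of \eqref{eq:s-admissible}. These values are pairwise distinct and unbounded, which yields infinitely many admissible $s$. Given $s_0\ge 1$, taking $t\ge \lceil s_0/(mP)\rceil$ gives $s\ge s_0$. The edge cases need no separate treatment: if $m=1$ the first congruence is vacuous, and the choice of $x_0\ge 1$ also covers $r=0$, since CRT still returns a positive residue.

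I do not expect a genuine obstacle here. The only point needing care is to justify $\gcd(m,P)=1$ from $\gcd(m,b)=1$, so that CRT applies, and to note that $s\equiv 1\pmod{P}$ really does give coprimality with $b$ itself, not merely with $P$. Both follow because $b$ and $P$ have the same prime divisors.

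Although not part of the lemma itself, for the remainder of \cref{thm:main} I would observe that the associated $n_s$ are distinct for distinct $s$, since $\digs_B(n_s)=s$ by \cref{prop:ns-properties}. This gives infinitely many simultaneous Niven numbers in $S_{m,r}$, with $\digs_b(n_s)=\digs_B(n_s)=s\ge s_0$.
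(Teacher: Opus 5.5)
Your proof is correct and is essentially the paper's own argument: take $\rad(b)$, use the CRT to impose $s\equiv r\pmod{m}$ and $s\equiv 1\pmod{\rad(b)}$, then run along the progression $s=s^\ast+t\,m\rad(b)$ with $t\ge 0$. Your handling of the edge cases and of the bound $s\ge s_0$ matches it as well.
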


\begin{proof}
Setting $q:=\rad(b)$, we have $\gcd(m,q)=1$ because $\gcd(m,b)=1$. By the CRT, the simultaneous congruences
\[
s\equiv r \zzmod{m}
\qquad\text{and}\qquad
s\equiv 1 \zzmod{q}
\]
have a solution $s\equiv s^\ast\zmod{mq}$. Let $s^\ast$ denote the least positive integer in that residue class. Every integer
\[
s=s^\ast+t(mq)\qquad (t\ge 0)
\]
then satisfies $s\equiv r\zmod{m}$ and $s\equiv 1\zmod{q}$; hence for such $s$ we have $\gcd(s,q)=1$. Since $q$ has the same prime divisors as $b$, this implies $\gcd(s,b)=1$. Taking $t$ arbitrarily large yields infinitely many admissible $s$, and by choosing $t$ large enough we can ensure that $s\ge s_0$.
\end{proof}

\begin{lemma}\label{lem:s-distinct}
If $s$ and $s'$ are distinct admissible integers, then $n_s\ne n_{s'}$.
\end{lemma}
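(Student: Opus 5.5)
The plan is to read off an invariant of $n_s$ that recovers $s$, namely its digit sum. By \cref{prop:ns-properties}, Part \ref{p:2}, we have $\digs_B(n_s)=s$ and $\digs_B(n_{s'})=s'$ for any admissible $s$ and $s'$. The function $\digs_B$ is well defined on positive integers, because base-$B$ expansions are unique. Hence $n_s=n_{s'}$ would force $s=\digs_B(n_s)=\digs_B(n_{s'})=s'$. Taking the contrapositive, distinct admissible $s\ne s'$ give $n_s\ne n_{s'}$.

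The only point to check is that Part \ref{p:2} really yields $\digs_B(n_s)=s$ exactly. The spacing $\omega_s$ depends on $s$, but this causes no trouble. Since $\omega_s\ge 1$, the exponents $j\omega_s$ for $0\le j\le s-1$ are pairwise distinct, so the base-$B$ expansion of $n_s$ has exactly $s$ digits equal to $1$ and all other digits $0$. This is what the proof of \cref{prop:ns-properties} established, so no further computation is needed. There is no real obstacle here: the whole argument reduces to the uniqueness of base-$B$ representation.

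As an alternative, one could compare sizes instead. From $n_s\equiv s\pmod{ms}$ we get only congruence information, so that route is weaker. The digit-sum invariant is cleaner and is the route I would take.

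Combined with \cref{lem:many-s}, this lemma gives infinitely many distinct $n_s\in S_{m,r}$ that are simultaneously $b$- and $B$-Niven. Moreover, $\digs_b(n_s)=\digs_B(n_s)=s\ge s_0$ can be arranged, which completes the proof of \cref{thm:main}.
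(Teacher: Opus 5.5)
Your proof is correct and is essentially the paper's argument. Both prove the contrapositive: $n_s=n_{s'}$ forces $\digs_B(n_s)=\digs_B(n_{s'})$, and Part~\ref{p:2} of \cref{prop:ns-properties} identifies these digit sums as $s$ and $s'$.
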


\begin{proof}
We prove the contrapositive: If $n_s=n_{s'}$, then \textit{a priori},
\begin{equation}
\digs_B(n_s)=\digs_B(n_{s'}).
\label{eq:digseq}
\end{equation}
But by Part~\ref{p:2} of \cref{prop:ns-properties}, we have $\digs_B(n_s)=s$ and $\digs_B(n_{s'})=s'$; hence, \eqref{eq:digseq} implies that $s=s'$.
\end{proof}

\begin{proof}[Proof of \cref{thm:main}]
By \cref{lem:many-s}, there exist infinitely many admissible integers $s$. For each such $s$,
\cref{prop:ns-properties} shows that $n_s\in S_{m,r}$, and moreover $n_s$ is simultaneously $b$-Niven and
$B$-Niven. \Cref{lem:s-distinct} shows the $n_s$ associated to admissible $s$ are all distinct, giving infinitely many 
integers with the desired properties.

For the strengthening, given $s_0\ge 1$, we choose an admissible $s\ge s_0$ (which exists by \cref{lem:many-s}).
Part~\ref{p:2} of \cref{prop:ns-properties} then gives $\digs_b(n_s)=\digs_B(n_s)=s\ge s_0$, as desired.
\end{proof}

\section{An illustrative example}\label{sec:example}

Take $b=2$, $k=3$, so $B=b^k=8$. Let $m=5$ and $r=3$.

We choose $s\equiv 3\zmod{5}$ and $\gcd(s,2)=1$; the smallest such choice is $s=3$. We have
\begin{gather*}
8\equiv 8\pmod{15},\\
8^2\equiv 64\equiv 4\pmod{15},\\
8^4\equiv 16\equiv 1\pmod{15},
\end{gather*}
and since $8^2\not\equiv 1\zmod{15}$, we have $\omega_s=4$.

The construction then gives
\[
n_s = 1 + 8^{4} + 8^{8} = 1 + 4096 + 16777216 = 16781313.
\]
Now $n_s\equiv 3\zmod{5}$, so $n_s\in S_{5,3}$. Also, since $8^4\equiv 1\zmod{3}$, we have
\[
n_s \equiv 1+1+1 \equiv 0\pmod{3},
\]
so $3\mid n_s$.

In base $8$, we have
\[
n_s = 1\cdot 8^8 + 1\cdot 8^4 + 1\cdot 8^0 = (100010001)_8;
\]
hence, $\digs_8(n_s)=3$ as expected. In base $2$, 
\[
n_s = 1\cdot 2^{24} + 1\cdot 2^{12} + 1\cdot 2^0=(1000000000001000000000001)_2,
\]
with exactly three $1$s (at positions $0$, $12$, and $24$); hence, $\digs_2(n_s)=3$.

Thus, we see that $n_s$ is simultaneously $2$-Niven and $8$-Niven and lies in the progression $n\equiv 3\zmod{5}$.

\section{Concluding remarks}\label{sec:discussion}

We have proven that for every pair of power-related bases $\{b,b^k\}$ and every arithmetic progression
$S_{m,r}$ with $\gcd(m,b)=1$, there are infinitely many integers in $S_{m,r}$ that are simultaneously
$b$-Niven and $b^k$-Niven. The argument is elementary and rests on two simple ideas:
\begin{enumerate}
	\item digit-sum compatibility for bases $b$ and $b^k$ when base-$b^k$ digits lie in $\{0,\dots,b-1\}$, and
\item congruence forcing via multiplicative order to ensure both the prescribed progression and
divisibility by the chosen digit sum.
\end{enumerate}

\Cref{thm:main} only asks for $\digs_b(n)\mid n$ and $\digs_B(n)\mid n$, with no relationship required between
$\digs_b(n)$ and $\digs_B(n)$. Yet our construction yields the stronger property
\[
\digs_b(n_s)=\digs_B(n_s)
\]
because the base-$B$ digits are chosen to lie in $\{0,1\}\subseteq\{0,1,\dots,b-1\}$, allowing the digit-sum
compatibility obtained in \cref{prop:digit-compat}. This feature appears to be an artifact of the method,
but it is convenient and may be useful in other simultaneous-base questions.

We note also that our proof of \cref{thm:main} is completely explicit: for a chosen admissible $s$, we compute $\omega_s=\ord_{ms}(B)$ and then construct $n_s$ via \eqref{eq:ns-def}. The resulting integers grow very rapidly (roughly on the order of $B^{(s-1)\omega_s}$), so this is not an efficient enumeration method, but it does yield a concrete infinite family and a constructive existence proof.

\subsection{Relation to the order--repunit method in the coprime case}
For a fixed base $b$ and modulus coprime to $b$, using multiplicative order to force congruences is standard. In particular, for a chosen integer $s$ with $\gcd(s,b)=1$, setting
\begin{equation}
\omega_s^{(b)}:=\ord_{ms}(b),
\qquad
n_s^{(b)}:=\sum_{j=0}^{s-1} b^{j\omega_s^{(b)}}
\label{eq:cong-forcing}
\end{equation}
gives $b^{\omega_s^{(b)}}\equiv 1\zmod{ms}$, and so $n_s^{(b)}\equiv s\zmod{ms}$ and $\digs_b(n_s^{(b)})=s$. Choosing $s\equiv r\zmod{m}$ then places $n_s^{(b)}$ in a prescribed arithmetic progression. (This is precisely the special case $k=1$ of \cref{thm:main}.)

The present construction is a structured refinement of this idea, using $B$ instead of $b$ in \eqref{eq:cong-forcing}. Thus, as noted in \cref{rem:repunit}, $n_s$ is a length-$s$ repunit in the auxiliary base $B^{\omega_s}$, appearing in base $B$ as a sparse repunit (cf.~\cite{Witno2016}) with digits equal to $1$. As in the single-base--coprime case argument, multiplicative order enforces the required congruences, but here the power-related structure additionally allows the same sparse repunit to have identical digit sums in bases $b$ and $B$.

\subsection{Extension to multiple power-related bases}

If $\ell$ is a positive divisor of $k$, then the integers $n_s$ constructed in our proof of \cref{thm:main} are automatically $b^\ell$-Niven. Indeed, writing $k=\ell t$ gives
\[
n_s=\sum_{j=0}^{s-1} b^{kj\omega_s}=\sum_{j=0}^{s-1}(b^\ell)^{tj\omega_s},
\]
so the base-$b^\ell$ expansion of $n_s$ has exactly $s$ digits equal to $1$, and thus $\digs_{b^\ell}(n_s)=s$.

In our numerical example in \cref{sec:example}, we also have $n_s=(1000001000001)_4$, so that $n_s$ is $4$-Niven in addition to being $2$- and $8$-Niven. Here $4=2^2$ and $\ell=2$ does not correspond to a divisor of $k=3$, so this additional $4$-Niven-ness is an arithmetical coincidence. In general, applying the construction given for $\{b,b^k\}$ does not guarantee $b^{\ell}$-Niven-ness for intermediate exponents $\ell$ with $\ell\nmid k$.\footnote{For instance, again take $b=2$ and $k=3$ (so $B=8$), and choose $m=1$ and $s=7$. Then $\omega_s=\ord_{7}(8)=1$, so the construction gives
\[
n_s = 8^6 + \cdots + 8^2 + 8 + 1 = 299593,
\]
which is simultaneously $2$- and $8$-Niven:
\[
299593=(1001001001001001001)_2=(1111111)_8,
\]
with $\digs_2(299593)=\digs_8(299593)=7$ by construction. However, $299593=(1021021021)_4$ with $\digs_4(299593)=10\nmid 299593$, so $n_s$ is not $4$-Niven.}  Nevertheless, the divisibility observation above implies that an extension of our construction makes it possible to pick up any finite collection of power-related bases $\{b,b^2,\ldots,b^k\}$.

\begin{corollary}\label{cor:tower}
Let $b \ge 2$ and $k \ge 1$ be integers, and put $K=\lcm(1,2,\dots,k)$. If $m\ge 1$ is such that $\gcd(m,b)=1$, then for any $r$ with $0\le r<m$, there exist infinitely many integers $n \in S_{m,r}$ that are
$b^{\ell}$-Niven for all $\ell$ with $1\le \ell\le k$. Moreover, for every $s_0\ge 1$ there exists
such an $n$ with
\[
\digs_b(n)=\digs_{b^2}(n)=\cdots = \digs_{b^k}(n)\ge s_0.
\]
\end{corollary}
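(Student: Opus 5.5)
The plan is to apply \cref{thm:main}, or more precisely the construction of \cref{sec:construction}, with the exponent $k$ replaced by $K=\lcm(1,2,\dots,k)$. We then combine the result with the divisor observation just made. Concretely, set $B_K:=b^K$. Since $\gcd(m,b)=1$, \cref{lem:many-s} gives infinitely many admissible $s$, and we may take $s\ge s_0$. For each such $s$, define $\omega_s:=\ord_{ms}(B_K)$ and
\[
n_s:=\sum_{j=0}^{s-1} B_K^{j\omega_s}=\sum_{j=0}^{s-1} b^{Kj\omega_s}.
\]
\Cref{prop:ns-properties}, applied with $k$ replaced by $K$, then gives $n_s\equiv r\zmod{m}$, $s\mid n_s$, and $\digs_b(n_s)=\digs_{b^K}(n_s)=s$. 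The proof of that proposition uses nothing about $k$ beyond $k\ge1$ and $\gcd(B,ms)=1$, and the latter holds for $B_K$ exactly as it does for $B$.

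Next we fix $\ell$ with $1\le \ell\le k$. Since $\ell\mid K$, write $K=\ell t$. Then
\[
n_s=\sum_{j=0}^{s-1}(b^\ell)^{tj\omega_s}.
\]
The exponents $tj\omega_s$ for $0\le j\le s-1$ are distinct, because $t,\omega_s\ge1$. Hence this is the base-$b^\ell$ expansion of $n_s$: it has $s$ digits equal to $1$ and all other digits $0$, with no carries. So $\digs_{b^\ell}(n_s)=s$. This could also be phrased as \cref{prop:digit-compat} applied to the pair $(b^\ell, b^K=(b^\ell)^t)$, whose base-$b^K$ digits lie in $\{0,1\}$. Together with $s\mid n_s$, this shows that $n_s$ is $b^\ell$-Niven for every $1\le\ell\le k$, and that $\digs_b(n_s)=\digs_{b^2}(n_s)=\cdots=\digs_{b^k}(n_s)=s\ge s_0$.

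For infinitude, we argue as in \cref{lem:s-distinct}: distinct admissible $s$ give distinct $n_s$, because $\digs_b(n_s)=s$ determines $s$. Since \cref{lem:many-s} supplies infinitely many admissible $s$, we obtain infinitely many such $n$ in $S_{m,r}$.

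I do not expect a genuine obstacle here. The only point needing care is to check that replacing $k$ by $K$ in \cref{prop:ns-properties} is legitimate, which it is, since the statement holds for arbitrary $k\ge1$. One also has to note that the base-$b^\ell$ representation has no carries because the coefficients are $0$ or $1$ at distinct positions.
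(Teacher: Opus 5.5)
Your proposal is correct and follows essentially the same route as the paper. You run the sparse-repunit construction in base $b^K$ with $K=\lcm(1,\dots,k)$, use $\ell\mid K$ to see that $n_s$ is a sum of $s$ distinct powers of $b^\ell$ for each $\ell\le k$, and get distinctness and infinitude from the fact that $\digs_b(n_s)=s$ determines $s$.
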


\begin{proof}
We fix $s_0\ge 1$ and choose an admissible $s\ge s_0$ as in \cref{lem:many-s}, so that
$s\equiv r\zmod{m}$ and $\gcd(s,b)=1$.

We set $\Bb:=b^K$ and define
\[
\Omega_s=\ord_{ms}(\Bb),
\qquad
N_s:=\sum_{j=0}^{s-1} \Bb^{j\Omega_s}
\]
(note that $\ord_{ms}(\Bb)$ exists because $\gcd(\Bb,ms)=1$). By construction, $\Bb^{\Omega_s}\equiv 1\zmod{ms}$, so the same argument as in the proof of \cref{prop:ns-properties} gives $N_s\equiv s\equiv r\zmod{m}$ and $s\mid N_s$.

Now, we fix $\ell\in\{1,2,\dots,k\}$. Since $\ell\mid K$, we can write
\[
\Bb^{j\Omega_s}=b^{Kj\Omega_s}=(b^\ell)^{(K/\ell)j\Omega_s}.
\]
Thus, we see that
\[
N_s=\sum_{j=0}^{s-1}(b^\ell)^{(K/\ell)j\Omega_s}
\]
is a sum of distinct powers of $b^\ell$, each with coefficient $1$; hence, the base-$b^\ell$ expansion of
$N_s$ has exactly $s$ digits equal to $1$ and no carries, so $\digs_{b^\ell}(N_s)=s$. Since also $s\mid N_s$,
it follows that $\digs_{b^\ell}(N_s)\mid N_s$; this holds for every $\ell=1,\dots,k$, as desired.

Finally, distinct admissible $s$ chosen in this way yield distinct integers $N_s$ since $\digs_b(N_s)=s$; the infinitude claim then follows by taking larger and larger $s_0$.
\end{proof}

\subsection{Potential for further extensions}

Our construction relies on choosing $\omega_s$ so that $B^{\omega_s}\equiv 1\zmod{m}$ and hence
$B^{j\omega_s}\equiv 1\zmod{m}$ for all $j$. \Cref{cor:tower} shows that the same construction can simultaneously handle several bases that are all powers of a common base $b$, provided the exponents are taken from a finite set. But if a prime $p$ divides both $m$ and $b$, then $p\mid B$ and so $B^\ell\equiv 0\zmod{p}$ for every $\ell\ge 1$, making it impossible for us to attain $B^{\omega_s}\equiv 1\zmod{m}$. Thus, the assumption $\gcd(m,b)=1$ is not merely technical: it is required for the particular congruence-forcing mechanism used here. 

Harrington, Litman, and Wong \cite{HLW2024}, meanwhile, proved that \textit{every} arithmetic progression---not just those with $\gcd(m,b)=1$---contains infinitely many $b$-Niven numbers for any single base $b \geq 2$; this suggests it is at least plausible that the relative primality hypothesis in \cref{thm:main} could be relaxed. We also might hope to simultaneously attain Niven-ness across sets of \textit{non}--power-related bases, but this would seem to require new ideas for how to link the structure across bases. 

\providecommand{\bysame}{\leavevmode\hbox to3em{\hrulefill}\thinspace}
\providecommand{\MR}{\relax\ifhmode\unskip\space\fi MR }
\providecommand{\MRhref}[2]{%
  \href{http://www.ams.org/mathscinet-getitem?mr=#1}{#2}
}
\providecommand{\href}[2]{#2}

\end{document}